\documentclass[a4,12pt,reqno]{amsart}
\usepackage{enumerate} \usepackage{lmodern}   \usepackage{euscript} \usepackage[pdftex]{hyperref}
 \oddsidemargin   5mm    \evensidemargin  5mm   \textheight  230.0mm    \textwidth  160.0mm \topmargin -10.0mm

\newcommand{\bz}{\overline{z}} \newcommand{\bw}{\overline{w}}
\newcommand{\pz}{\partial_z} \newcommand{\pbz}{\partial_{\bz}}

\numberwithin{equation}{section}  \makeatletter\@addtoreset{equation}{section}
   \DeclareMathSymbol{\subsetneqq}{\mathbin}{AMSb}{36}
\newtheorem {theorem}{Theorem}[section]            \newtheorem {lemma}[theorem]{Lemma}
   \newtheorem {corollary}[theorem]{Corollary}     \newtheorem {remark}[theorem]{Remark}
\newtheorem {proposition}[theorem]{Proposition}       
         
\newcommand{\C}{\mathbb C}      \newcommand{\R}{\mathbb R}



\pagestyle{myheadings}
\begin{document}
\title[]
{Operational formulae for the complex Hermite polynomials $H_{p,q}(z,\bz )$}
\author[A. Ghanmi]{{\bf  Allal Ghanmi}}
\address{
        {Department of Mathematics, Faculty of Sciences,   \newline
        P.O. Box 1014, Mohammed V - Agdal University, 10000 Rabat, Morocco}}
        \email{ag@fsr.ac.ma}

\maketitle

\begin{abstract}
We give operational formulae of Burchnall type involving complex Hermite polynomials.
Short proofs of some known formulae are given and new results involving these polynomials,
including Nielsen's identities and Runge addition formula, are derived.
\bigskip

{\bf Keywords:}
Operational formula; Complex Hermite polynomials; Runge's addition formula; Generating function; Nielsen's identity
\end{abstract}

\section{Introduction} 

Operational formulae, involving special orthogonal polynomials, appear as useful tools to obtain some new results or to give short proofs of known formulae,
see Burchnall \cite{Burchnall41,Burchnall51}, Carlitz \cite{Carlitz60}, Al-Salam \cite{Al-Salam61}, Gould-Hopper \cite{GouldHopper62},
  Chatterjea \cite{Chatterjea63a}, 
  Singh \cite{Singh65}, 
   Karande and Thakare \cite{KarandeThakare73}, Al-Salam and Ismail \cite{Al-SalamIsmail75},
   Patil and Thakare  \cite{PatilThakare78}, Chatterjea and Srivastava \cite{ChatterjeaSrivastava93}, Purohit and  Raina \cite{PurohitRaina09}, Kaanoglu \cite{Kaanoglu12} among others.
   As basic 
   example, one cites the classical real Hermite polynomials (\cite{Hermite1864-1908,Rainville71,Szego75,Thangavelu93}):
 $$ H_{m}(x)=(-1)^{m}e^{x^2}D^m(e^{-x^2}); \qquad D:=\frac d{dx},$$
 which are extensively studied and have found wide application in various branches of mathematics, physics and technology.
 They possess a complete and rich list of remarkably interesting properties and some of them can be obtained using the Burchnall's operational formula \cite{Burchnall41}:
          \begin{align} \label{ROpfsum}
              \left(- D +2x\right)^{m}(f) = m! \sum\limits_{k=0}^{m} \frac{(-1)^{k}}{k!}
              \frac{H_{m-k}(x)}{(m-k)!} D^{k}(f).
          \end{align}
 Indeed, it can be employed to give a direct and simple proof of the Runge addition formula  \cite{Runge14,Kampe23}:
          \begin{align} \label{addition-formula}
             H_{m}(x+y)=\left(\frac 12\right)^{m/2} m! \sum\limits_{k=0}^{n}\frac{H_{k}(\sqrt{2}x)}{k!} \frac{H_{m-k}(\sqrt{2}y)}{(m-k)!} .
          \end{align}
as well as of the  quadratic recurrence formula (Nielsen's identity \cite{Nielsen18}):
          \begin{align}\label{Nielsen}
             H_{m+n}(x) =m!n!\sum\limits_{k=0}^{min(m,n)}\frac{(-2)^k}{k!} \frac{H_{m-k}(x)}{(m-n)!}\frac{H_{n-k}(x)}{(n-k)!}.
          \end{align}
An extension of \eqref{ROpfsum} is given by Gould and Hopper in \cite{GouldHopper62}
for the generalized Hermite polynomials
   $$
   H_m^\gamma(x,\alpha,p): = (-1)^m x^{-\alpha} e^{px^\gamma}D^m \left(x^{\alpha} e^{-px^\gamma} \right).
   $$

As an interesting extension of the real Hermite polynomials $H_{m}(x)$ are the complex Hermite polynomials $H_{p,q}(z,\bz)$, for  $z=x+iy \in\C$; $x,y\in \R$. They were considered by Ito in \cite{Ito51} in the context of complex Markov process,
         \begin{align*}
         H_{p,q}(z,\bz )=(-1)^{p+q}e^{z\bz }
        \pbz^p\pz^q \left(e^{-z \bz }\right),
         \end{align*}
where $\pz $ and $\pbz $ stand for
         $$
         \pz =  \dfrac{\partial}{\partial z} = \frac 12\left( \dfrac{\partial}{\partial x} - i  \dfrac{\partial}{\partial y}\right)
         \quad \mbox{and} \quad
         \pbz  := \dfrac{\partial}{\partial \bz } = \frac 12\left( \dfrac{\partial}{\partial x} + i  \dfrac{\partial}{\partial y}\right); \quad i=\sqrt{-1}.
         $$
 For the unity of the formulation, we define trivially $H_{p,q}(z,\bz ) = 0 $ whenever $p<0$ or $q<0$.
This class of polynomials appears naturally when investigating spectral properties of some second order differential operators of Laplacian type
 (\cite{Shigekawa87,Folland89,Thangavelu98,Wong98}) and can be connected to some classes of generalized Bargmann spaces. Several interesting features of
 $H_{p,q}(z,\bz )$ in connection of coherent states theory have been studied recently  \cite{Ali10,CoftasGazeau10,Thirulo10}. The complex Hermite polynomials $H_{p,q}(z,\bz)$ can be expressed in terms of $H_m(x)$ as
          \begin{align*}
             H_{p,q}(z,\bz ) =  p!q!\left(\frac{i}{2}\right)^{p+q}
               \sum\limits_{j=0}^{p}\sum\limits_{k=0}^{q}
               (-1)^{(q+j)}i^{j+k} \frac{H_{j+k}(x)}{j!k!}\frac{ H_{p+q-j-k}(y)}{(p-j)!(q-k)!}
          \end{align*}
 or in terms of the generalized Laguerre polynomials $L^{(\alpha)}_n(x)$ 
as
           \begin{align*}
             H_{p,q}(z,\bz ) =  (-1)^{min(p,q)} (min(p,q))! |z|^{|p-q|} e^{i(p-q)\arg(z)}  L^{(|p-q|)}_{min(p,q)}(|z|^2)
          \end{align*}
for $z=|z|e^{i\arg(z)}$ (see \cite[Eq. (2.3)]{IntInt06} where there $h_{m,p}$ means $H_{p,m}$ in ours),  so that for $q=p+k$, we get
          \begin{align}\label{Laguerre}
             H_{p,p+k}(z,\bz ) =  (-1)^{p} p! {\bar z}^{k} L^{(k)}_{p}(|z|^2).
          \end{align}
 Moreover, they constitute an orthogonal basis of the Hilbert space $L^2(\C; e^{- z\bz } dxdy)$ (see \cite{IntInt06,Gh08}):
$$ \int_{\C} H_{p,q}(z,\bz ) \overline{H_{m,n}(z,\bz )} e^{- z\bz } dxdy = \pi\delta_{pm}\delta_{qn} p!q!.$$

In course of our investigation, we establish new operational formulae involving $H_{p,q}(z,\bz )$ analogous to the Burchnall one \eqref{ROpfsum}.
In addition, we will use them in a simple way to obtain new properties satisfied by these polynomials.
Mainely, we are interested in Nielsen's identities, generating functions and Runge addition formula.

\section{Burchnall's operational formula for $ H_{p,q}(z,\bz )$} 
In order to obtain some operational formulae for $ H_{p,q}(z,\bz )$ of Burchnall type, we begin by noting that
the complex Hermite polynomials
      \begin{align}
      H_{p,q}(z,\bz ) & =(-1)^{p+q}e^{z\bz }  \pbz^{p}\pz^{q} \left(e^{-z \bz }\right)   \label{RodriguezComp}\\
                        &=(-1)^p e^{ z \bz } \pbz^{p} \left(\bz^{q} e^{- z \bz }\right)  \label{RodriguezEquivComp1}\\
                        &=(-1)^q e^{ z \bz } \pz^{q} \left(z^{p} e^{- z \bz }\right) ,   \label{RodriguezEquivComp}
      \end{align}
can be rewritten in the following equivalent forms:
      \begin{align}
      H_{p,q}(z,\bz )  &=  \left(-\pbz + z \right)^{p}(\bz^{q}) \label{OpForHer22}\\
                       &=  \left(-\pz +\bz \right)^{q}(z^{p}), \label{OpForHer2}
      \end{align}
which appears as special cases of the following

\begin{lemma}
For every sufficiently differentiable function $f$, we have
      \begin{align}
      &\left(-\pbz + z \right)^{p}(f)
       = (-1)^p e^{ z \bz } \pbz^p  \left(e^{- z \bz }f\right), \label{OpForHer11} \\
      &\left(-\pz +\bz \right)^{q}(f)
       = (-1)^q e^{ z \bz } \pz^q  \left(e^{- z \bz }f\right) .\label{OpForHer1}
      \end{align}
\end{lemma}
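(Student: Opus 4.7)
The plan is to prove \eqref{OpForHer11} by recognizing that the first-order operator $-\pbz + z$ is conjugate to $-\pbz$ via multiplication by $e^{z\bz}$, and then iterate. The identity \eqref{OpForHer1} will follow by the obvious symmetry, interchanging the roles of $z$ and $\bz$ (equivalently, of $\pz$ and $\pbz$).

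The first step is the base case $p=1$. Using the Leibniz rule together with the Wirtinger convention $\pbz z = 0$, I compute
$$\pbz\arc{e^{-z\bz}f} = \arc{\pbz e^{-z\bz}}f + e^{-z\bz}\pbz f = -z\, e^{-z\bz}f + e^{-z\bz}\pbz f = e^{-z\bz}\arc{\pbz - z}f,$$
and multiplying by $-e^{z\bz}$ yields the identity $(-\pbz + z)f = -e^{z\bz}\pbz\arc{e^{-z\bz}f}$, which is \eqref{OpForHer11} for $p=1$.

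The second step is to iterate. Read as an operator identity on the space of smooth functions (with $e^{\pm z\bz}$ denoting the multiplication operators), the base case becomes
$$-\pbz + z \; = \; -\,e^{z\bz}\,\pbz\,e^{-z\bz}.$$
Composing this with itself $p$ times, the inner factors telescope because $e^{-z\bz}e^{z\bz} = 1$, giving
$$\arc{-\pbz + z}^{p} \; = \; (-1)^p\, e^{z\bz}\, \pbz^p\, e^{-z\bz},$$
which applied to $f$ is exactly \eqref{OpForHer11}. The same conclusion may of course be obtained by a straightforward induction on $p$, applying $(-\pbz + z)$ to the inductive hypothesis and invoking the base case on $g := e^{z\bz}\pbz^p(e^{-z\bz}f)$.

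The third step is \eqref{OpForHer1}, which is proved by the identical argument with $(z,\pbz)$ replaced by $(\bz,\pz)$, relying on the conjugate Wirtinger relation $\pz \bz = 0$. There is no genuine obstacle here: the only point requiring care is the independence of $z$ and $\bz$ under the Wirtinger operators, after which the proof reduces to a clean conjugation of operators by $e^{z\bz}$.
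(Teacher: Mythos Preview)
Your proof is correct and follows essentially the same approach as the paper: both establish the base case as the conjugation identity (the paper writes it as $\left(-\pz+\bz\right)\!\left(e^{z\bz}g\right)=e^{z\bz}(-\pz g)$ and then sets $g=e^{-z\bz}f$, which is your operator identity read from the other side) and then iterate by induction, with your telescoping of the composed conjugations being just a tidy way to phrase that induction.
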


\begin{proof}
 We note first that \eqref{OpForHer11} can be handled in a similar way as down below for \eqref{OpForHer1}. Indeed, direct computation yields
     $
     \left(-\pz +\bz \right)\left(e^{ z \bz }g\right) = e^{ z \bz }\left(-\pz g\right).
     $
Next, by induction, we get
     $$
     \left(-\pz +\bz \right)^{q}\left(e^{ z \bz }g\right) = (-1)^q e^{ z \bz }  \pz^q  (g),
     $$
which infers \eqref{OpForHer1} for  $g=e^{- z \bz }f$.
\end{proof}

Consequently, it follows
\begin{proposition} For given positive integers $p,q$ and  every sufficiently differentiable function $f$, we have
     \begin{align}
      a) \quad & \left(-\pz +\bz \right)^{q}(f)=q!\sum\limits_{k=0}^{q}\frac{(-1)^k}{k!}
                 \frac{\bz ^{q-k}}{(q-k)!} \pz^{k} (f) ,\label{OpForHerProp}\\
      b) \quad & \left(-\pz +\bz \right)^{q}(f)
                 =q!\sum\limits_{k=0}^{q}\frac{(-1)^k}{k!} \frac{ H_{p,q-k}(z,\bz )}{(q-k)!}\pz^{k} (z^{-p} f)
                 \label{OpForHerPropBurch} .
     \end{align}
\end{proposition}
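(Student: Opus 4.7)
The plan is to derive both identities from the Lemma \eqref{OpForHer1} by applying the Leibniz product rule to the $\pz^q$ acting on a product inside the right-hand side, just grouping the factors of that product differently for (a) and for (b).

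For part (a), I would start from \eqref{OpForHer1} in the form
$\left(-\pz+\bz\right)^{q}(f)=(-1)^{q}e^{z\bz}\pz^{q}\left(e^{-z\bz}f\right)$
and regard $e^{-z\bz}f$ as the product of $e^{-z\bz}$ and $f$. The Leibniz rule gives $\pz^{q}(e^{-z\bz}f)=\sum_{k=0}^{q}\binom{q}{k}\pz^{q-k}(e^{-z\bz})\pz^{k}(f)$, and since $\pz(e^{-z\bz})=-\bz\,e^{-z\bz}$ one has $\pz^{q-k}(e^{-z\bz})=(-\bz)^{q-k}e^{-z\bz}$. Plugging this back in and cancelling the exponentials produces exactly \eqref{OpForHerProp}; the overall sign $(-1)^{q}(-1)^{q-k}=(-1)^{k}$ falls out, and the factor $q!$ reorganises the binomial coefficient into $1/(k!(q-k)!)$.

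For part (b), the same mechanism works but with the smarter factorisation $e^{-z\bz}f=\bigl(z^{p}e^{-z\bz}\bigr)\cdot\bigl(z^{-p}f\bigr)$ inside \eqref{OpForHer1}. The Leibniz rule now yields
\[
\pz^{q}\bigl((z^{p}e^{-z\bz})(z^{-p}f)\bigr)=\sum_{k=0}^{q}\binom{q}{k}\pz^{q-k}\bigl(z^{p}e^{-z\bz}\bigr)\,\pz^{k}(z^{-p}f).
\]
The key observation is that by Rodrigues' formula \eqref{RodriguezEquivComp} one has $\pz^{q-k}(z^{p}e^{-z\bz})=(-1)^{q-k}e^{-z\bz}H_{p,q-k}(z,\bz)$, which turns the inner sum precisely into the complex Hermite polynomials $H_{p,q-k}(z,\bz)$ that appear in \eqref{OpForHerPropBurch}. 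Multiplying by $(-1)^{q}e^{z\bz}$, the exponentials disappear and the signs combine to $(-1)^{k}$, as required.

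I do not expect a genuine obstacle. The only thing to be careful about is the bookkeeping: the identity $\pz^{q-k}(z^{p}e^{-z\bz})=(-1)^{q-k}e^{-z\bz}H_{p,q-k}(z,\bz)$ must be read off from \eqref{RodriguezEquivComp} with $q$ replaced by $q-k$ (and $z^{-p}f$ taken as a smooth multiplier, so that \eqref{OpForHer1} is legitimately applicable to $f$ even when $f$ does not factor through $z^{p}$ — one simply interprets $z^{-p}f$ formally, or restricts to a region where $z\neq 0$; specialising later to $f=\bz^{q}$ or $f=z^{p}g$ recovers the intended applications). Beyond that, part (a) is the special case $p=0$ of part (b) once one sets $H_{0,q-k}(z,\bz)=\bz^{q-k}$, which is a consistency check worth recording but not needed in the proof itself.
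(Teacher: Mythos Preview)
Your proof is correct and follows essentially the same approach as the paper: both parts are obtained from \eqref{OpForHer1} via the Leibniz rule, with the only difference between (a) and (b) being whether one factors $e^{-z\bz}f$ as $e^{-z\bz}\cdot f$ or as $(z^{p}e^{-z\bz})\cdot(z^{-p}f)$, then invokes \eqref{RodriguezEquivComp}. Your additional remarks on the $z^{-p}f$ interpretation and on part (a) being the $p=0$ case of (b) are sound and go slightly beyond what the paper records.
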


\begin{proof}
Application of the Leibnitz formula to the product function $e^{- z \bz }f$ in \eqref{OpForHer1} gives
     \begin{align*}
      \left(-\pz +\bz \right)^{q}(f)
             &= (-1)^q e^{ z \bz } \sum\limits_{k=0}^{q}\binom{q}{k} \pz^{q-k} \left(e^{- z \bz }\right) \pz^{k} (f) \nonumber \\
             &= q!\sum\limits_{k=0}^{q}\frac{(-1)^k}{k!} \frac{\bz^{q-k}}{(q-k)!} \pz^{k} (f).
     \end{align*}
Now by writing $f$ as $f=z^{p} (z^{-p}f)$ in \eqref{OpForHer1} and next applying Leibnitz formula, we obtain
      \begin{align*}
      \left(-\pz +\bz \right)^{q}(f)
              &=(-1)^q e^{ z \bz }\sum\limits_{k=0}^{q} \binom{q}{k} \pz^{q-k}
                 \left(z^{p} e^{- z \bz }\right) \pz^{k} (z^{-p} f)\\
              &\stackrel{\eqref{RodriguezEquivComp}}{=} q!\sum\limits_{k=0}^{q}\frac{(-1)^k}{k!} \frac{ H_{p,q-k}(z,\bz )}{(q-k)!}\pz^{k} (z^{-p} f).
   \end{align*}
   \end{proof}

As immediate consequence of \eqref{OpForHerProp} when taking $f=z^{p}$, we get the explicit expansion
of the complex Hermite polynomial, 
    \begin{align} \label{HerExp}
    H_{p,q}(z,\bz )&=p!q!   \sum\limits_{k=0}^{min(p,q)}\frac{(-1)^{k}}{k!} \frac{\bz^{q-k}}{(q-k)!} \frac{z^{p-k}}{(p-k)!}  .
    \end{align}

   The expression \eqref{OpForHerPropBurch} for the complex Hermite polynomial can be considered as an analogue of \eqref{ROpfsum}.
   However,  we obtain in the sequel a more appropriate Burchnall's operational formula involving $H_{p,q}(z,\bz )$.

\begin{proposition}
For given positive integers $p,q$ and  every sufficiently differentiable function $f$, we have
     \begin{align}\label{NiceBurchnall0}
 e^{z\overline{z}}\pbz^{p}\pz^q\left( e^{-z\overline{z}} f\right)
 &=(-1)^{p+q} p!q!\sum\limits_{j=0}^{p} \sum\limits_{k=0}^{q}
                 \frac{(-1)^{j+k}}{j!k!} \frac{ H_{p-j,q-k}(z,\bz ) }{(p-j)!(q-k)!} \pbz^j\pz^{k} (f).
     \end{align}
\end{proposition}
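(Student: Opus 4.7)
The plan is to obtain (2.13) by a single direct application of the generalized Leibniz rule combined with the Rodriguez formula (2.3). Since the Wirtinger operators $\pz$ and $\pbz$ commute and act on smooth functions exactly as ordinary partial derivatives, the two-variable Leibniz rule yields
$$\pbz^{p}\pz^{q}(e^{-z\bz} f) \;=\; \sum_{j=0}^{p}\sum_{k=0}^{q}\binom{p}{j}\binom{q}{k}\,\pbz^{p-j}\pz^{q-k}(e^{-z\bz})\,\pbz^{j}\pz^{k}(f).$$
This is the only structural input needed; afterwards everything is algebraic.

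Next I would invoke the Rodriguez formula (2.3) with the shifted indices $p-j$ and $q-k$ to identify the factor involving the exponential:
$$\pbz^{p-j}\pz^{q-k}(e^{-z\bz}) \;=\; (-1)^{(p-j)+(q-k)}\, e^{-z\bz}\, H_{p-j,q-k}(z,\bz).$$
Substituting back into the Leibniz expansion and multiplying both sides by $e^{z\bz}$ cancels the exponential and produces a double sum with prefactor $(-1)^{(p-j)+(q-k)}$ and weight $\binom{p}{j}\binom{q}{k}$.

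The last step is pure bookkeeping: expand $\binom{p}{j}\binom{q}{k}=\frac{p!}{j!(p-j)!}\cdot\frac{q!}{k!(q-k)!}$, pull out the common prefactor $(-1)^{p+q}p!q!$, and rewrite $(-1)^{-j-k}=(-1)^{j+k}$ to recover exactly the form stated in (2.13).

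There is no genuine obstacle in this argument; the only subtlety is sign and factorial tracking. An equivalent but slightly longer route would first apply Lemma~2.1 (formulas (2.8) and (2.9)) in succession to rewrite the left-hand side as $(-1)^{p+q}(-\pbz+z)^{p}(-\pz+\bz)^{q}(f)$, and then expand via a Leibniz-type manipulation; however, the direct approach above has the merit of producing the $H_{p-j,q-k}(z,\bz)$ factors immediately from (2.3), without needing to pass through the intermediate operators $(-\pbz+z)$ and $(-\pz+\bz)$.
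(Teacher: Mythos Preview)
Your proposal is correct and follows essentially the same approach as the paper: a double Leibniz expansion followed by identification of the exponential derivatives with the complex Hermite polynomials via the Rodriguez formula. The only cosmetic difference is that the paper first evaluates $\pz^{q-k}(e^{-z\bz})=(-1)^{q-k}\bz^{q-k}e^{-z\bz}$ explicitly and then invokes the variant \eqref{RodriguezEquivComp1} to recognize $\pbz^{p-j}(\bz^{q-k}e^{-z\bz})$, whereas you go straight to \eqref{RodriguezComp} for $\pbz^{p-j}\pz^{q-k}(e^{-z\bz})$; both routes are equivalent and your version is slightly more direct.
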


\begin{proof}
By applying repetitively the Leibnitz formula, it follows
     \begin{align*}
 e^{z\overline{z}}\pbz^{p}\pz^q\left( e^{-z\overline{z}} f\right)
 &= e^{z\overline{z}} \sum\limits_{j=0}^{p} \sum\limits_{k=0}^{q}
 (-1)^{q-k}\binom{p}{j}\binom{q}{k} \pbz^{p-j} \left( \overline{z}^{q-k}
  e^{-z\overline{z}} \right) \pbz^j\pz^{k} (f).
 \end{align*}
 But from \eqref{RodriguezEquivComp1}, i.e., $e^{z\overline{z}} \pbz^{m} \left( \overline{z}^{n}
  e^{-z\overline{z}} \right)= (-1)^{m} H_{m,n}(z,\bz)$, we deduce
      \begin{align*}
e^{z\overline{z}}\pbz^{p}\pz^q\left( e^{-z\overline{z}} f\right)
 &=  \sum\limits_{j=0}^{p} \sum\limits_{k=0}^{q}
(-1)^{j+k} \binom{p}{j}\binom{q}{k}  H_{p-j,q-k}(z,\bz) \pbz^j\pz^{k} (f).
 \end{align*}
 This completes the proof.
\end{proof}

\begin{remark}
For $f(z)= e^{-\xi z/2}$ with $\xi \in \C$, the right hand side of \eqref{NiceBurchnall0} leads to the generalized complex Hermite polynomials studied in \cite{Gh08} and suggested by a special magnetic Schrödinger operator.
\end{remark}

\begin{remark}
Using \eqref{OpForHer1} together with the established fact
$\left(-\pbz +z \right)^p\left(e^{ z \bz }g\right) = (-1)^p e^{ z \bz }\left(\pbz^p g\right)$,  we obtain
     \begin{align*}
 \left(-\pbz +z \right)^{p} \left(-\pz +\bz \right)^{q}(f)
 &=(-1)^{p+q} e^{z\overline{z}}\pbz^{p}\pz^q\left( e^{-z\overline{z}} f\right).
     \end{align*}
Therefore, the operational formula \eqref{NiceBurchnall0} can be reworded as
 \begin{align}\label{NiceBurchnall}
 \left(-\pbz +z \right)^{p} \left(-\pz +\bz \right)^{q}(f)
 &=p!q!\sum\limits_{j=0}^{p} \sum\limits_{k=0}^{q}
                 \frac{(-1)^{j+k}}{j!k!} \frac{ H_{p-j,q-k}(z,\bz ) }{(p-j)!(q-k)!} \pbz^j\pz^{k} (f).
 \end{align}
Moreover, it is interesting to observe that $H_{p,q}(z,\bz )$ can be realized also as
    \begin{align}\label{realisationHerm-pq}
    H_{p,q}(z,\bz ) = \left(-\pbz +z \right)^{p} \left(-\pz +\bz \right)^{q}\cdot (1).
    \end{align}
\end{remark}

In the next section, we are concerned with some properties, for the complex Hermite polynomials, that follow directly from the previous
 operational representations, including \eqref{OpForHer2}, \eqref{OpForHerPropBurch} and its variant \eqref{OpForHerProp} as well as \eqref{NiceBurchnall}.

\section{Related properties}

\noindent {\bf 3.1. Recurrence formulae. } Note for instance that, since $\pz$ and $\left(-\pz +\bz \right)^{q}$ commute, we get
\begin{align} \label{HerDer1}
    \pz H_{p,q}(z,\bz ) = \pz \left(-\pz +\bz \right)^{q}(z^p) = \left(-\pz +\bz \right)^{q}\pz(z^p)= p  H_{p-1,q}(z,\bz ).
    \end{align}
Similarly, we have
\begin{align} \label{HerDer2}
    \pbz H_{p,q}(z,\bz ) = \pbz \left(-\pbz + z \right)^{p}(\bz^q) = \left(-\pbz + z \right)^{p}\pbz(\bz^q)= q  H_{p,q-1}(z,\bz ) .
    \end{align}
    Thus, we see
\begin{align} \label{HerDer3}
     \left(-\pz + \bz \right)\pbz H_{p,q}   =  q  H_{p,q} ,
    \end{align}
that is $H_{p,q}$ are eigenfunctions of the second order differential operator of Laplacian type $\Delta :=\left(-\pz + \bz \right)\pbz $.
Furthermore, the polynomials $H_{p,q}$  obey the recursion relations
     \begin{align}
      a)  \quad &  H_{p+1,q}   =     z H_{p,q} -  \pbz H_{p,q}. \label{RecF1}\\
      b)  \quad &  H_{p+1,q}   =     z H_{p,q} -   q H_{p,q-1}. \label{RecF2}\\
      a')  \quad &  H_{p,q+1}   =   \bz H_{p,q} -  \pz H_{p,q}. \label{RecF11}\\
      b')  \quad &  H_{p,q+1}   =   \bz H_{p,q} -   p H_{p-1,q}. \label{RecF22}
     \end{align}
Indeed,
      using \eqref{OpForHer2}, it follows
      $$
      H_{p+1,q}(z,\bz ) = \left(-\pbz + z \right)^{1+p}(\bz^q) = \left(-\pbz + z \right)  H_{p,q}(z,\bz ) =  -\pbz H_{p,q} + z H_{p,q}.
      $$
      Substitution of \eqref{HerDer2} in the previous equality gives rise to \eqref{RecF2}.
      Similarly \eqref{OpForHer22} yields \eqref{RecF11} and so \eqref{RecF22}.
Recurrence formula \eqref{RecF11} (resp. \eqref{RecF22}) can also be obtained from \eqref{RecF1} (resp. \eqref{RecF2}) by conjugation since the polynomials $ H_{p,q}(z,\bz )$ satisfy the following symmetry relation with respect to index permutation
 $ \overline{H_{p,q}(z,\bz )}=H_{q,p}(z,\bz )$.

\quad

\noindent {\bf 3.2. Quadratic recurrence formulae. }
By writing \eqref{OpForHerProp} for the special case of $f=H_{p,s}$, we get the following first variant of Nielsen's identity
   \begin{align} \label{HerExpNielsen}
     H_{p,q+s}(z,\bz )  = p!q!   \sum\limits_{k=0}^{min(p,q)}\frac{(-1)^{k}}{k!} \frac{\bz^{q-k}}{(q-k)!}
      \frac{H_{p-k,s}(z,\bz )}{(p-k)!} .
   \end{align}
This follows using
    $   H_{p,q+s}  = \left(-\pz +\bz \right)^{q}(H_{p,s})$
combined with the fact that
    $\pz^{k} (H_{p,s})=  \dfrac{p!}{(p-k)!}H_{p-k,s}$ for $k\leq p$.
Furthermore, by considering the particular case $f=z^{p+s}$ in  \eqref{OpForHerPropBurch}, we check that
   $$ \left(-\pz +\bz \right)^{q}(z^{p+s})=
    q!\sum\limits_{k=0}^{q}\frac{(-1)^k}{k!} \frac{ H_{p,q-k}(z,\bz )}{(q-k)!}\pz^{k} (z^{s}).$$
Whence, the second  variant of the Nielsen identity reads
    \begin{align}\label{HerExpGhanmi}
     H_{p+s,q}(z,\bz )=  q!s!\sum\limits_{k=0}^{min(q,s)}\frac{(-1)^k}{k!}\frac{z^{s-k}}{(s-k)!}\frac{H_{p,q-k}(z,\bz )}{(q-k)!}.
    \end{align}

    A third variant of Nielsen's identity can be established. More precisely, we have the following proposition.

    \begin{proposition} We have
    \begin{align}\label{NielsenGhanmi}
     H_{p+s,q}(z,\bz )=   \sqrt{2}^{p+s-q}q! \sum\limits_{k=0}^{q}\dfrac{H_{s,k}\left(\dfrac{z}{\sqrt{2}},\dfrac{\bz}{\sqrt{2}}\right)}{k!} \dfrac{ H_{p,q-k}\left(\dfrac{z}{\sqrt{2}},\dfrac{\bz}{\sqrt{2}} \right)}{(q-k)!}.
    \end{align}
    \end{proposition}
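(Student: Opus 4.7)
The plan is to argue via a partial generating function in the second index, mirroring the classical derivation of Runge's addition formula for real Hermite polynomials. The key observation is that splitting a single exponential symmetrically into two equal halves and then rescaling the summation variable by $1/\sqrt{2}$ produces a product of two generating functions evaluated at the rescaled argument $(z/\sqrt{2},\bz/\sqrt{2})$, which is exactly the shape of the right-hand side of \eqref{NielsenGhanmi}.

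Concretely, I would proceed in three steps. First, I would derive from the explicit expansion \eqref{HerExp} the closed form
$$\sum_{q \geq 0} H_{p,q}(z,\bz)\, \frac{v^q}{q!} = (z-v)^p\, e^{\bz v}$$
by interchanging the sums over $q$ and $k$ and recognising the resulting exponential series in $\bz v$. Second, I would apply this with $p$ replaced by $p+s$ and split the exponential symmetrically,
$$(z-v)^{p+s}\, e^{\bz v} = \bigl[(z-v)^s e^{\bz v/2}\bigr]\cdot\bigl[(z-v)^p e^{\bz v/2}\bigr].$$
Third, setting $v = \sqrt{2}\,w$, each factor reduces to $(\sqrt{2})^s$ (resp.\ $(\sqrt{2})^p$) times the Step 1 generating function evaluated at the rescaled argument $(z/\sqrt{2},\bz/\sqrt{2})$ in the variable $w$; a Cauchy product of the two series and identification of the coefficient of $v^q$ on each side then produces \eqref{NielsenGhanmi}, the overall factor $\sqrt{2}^{p+s-q}$ emerging from $(\sqrt{2})^{p+s}$ combined with the $q$ factors of $1/\sqrt{2}$ coming from the rescaled summation variable.

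The only genuinely new ingredient is the single-index generating function in Step 1, which is not displayed earlier in the paper; its derivation from \eqref{HerExp} is a two-line computation, so I do not anticipate a real obstacle. A purely operational alternative would be to apply the Burchnall identity \eqref{NiceBurchnall} to some cleverly chosen $f$, but the rescaling $z\mapsto z/\sqrt{2}$ does not correspond to a clean commutation of the operators $-\pbz + z$ and $-\pz + \bz$, so the generating function route is substantially cleaner than an operator-splitting approach.
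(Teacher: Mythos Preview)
Your argument is correct and leads to \eqref{NielsenGhanmi}, but it proceeds quite differently from the paper's proof. The paper stays operational throughout: it evaluates the single quantity $e^{z\bz}(-\pz+\bz)^q\bigl(z^{p+s}e^{-z\bz}\bigr)$ in two different ways, once via \eqref{OpForHer1} (which, after recognising the factor $e^{-2z\bz}$ and rescaling, produces $\sqrt{2}^{\,q-p-s}H_{p+s,q}(\sqrt{2}z,\sqrt{2}\bz)$), and once via \eqref{OpForHerPropBurch} combined with \eqref{RodriguezEquivComp} (which yields the sum $q!\sum_k H_{s,k}(z,\bz)H_{p,q-k}(z,\bz)/(k!(q-k)!)$); equating and then substituting $z\mapsto z/\sqrt{2}$ finishes. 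Your generating-function route is more elementary in that it needs no operational machinery beyond the explicit expansion \eqref{HerExp}, and the symmetric splitting of $e^{\bz v}$ makes the origin of the $\sqrt{2}$-rescaling completely transparent. The paper's approach, on the other hand, fits its operational theme and shows that the ``cleverly chosen $f$'' you were sceptical of is simply $f=z^{p+s}e^{-z\bz}$, the rescaling emerging naturally from the doubled exponent $e^{-2z\bz}$; so the operator-splitting route is in fact quite clean. Incidentally, your Step~1 identity is exactly the paper's later formula \eqref{GenFctComp11}; since you derive it directly from \eqref{HerExp} there is no circularity, but you might note the coincidence.
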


    \begin{proof} The proof of \eqref{NielsenGhanmi} can be handled by writing $e^{z\bz}\left(-\pz +\bz \right)^{q}(z^{p+s} e^{- z\bz})$ in two manners. Indeed, the first one follows by taking $f= z^{p+s} e^{- z\bz}$ in \eqref{OpForHer1}. In fact,
    \begin{align*} e^{z\bz}\left(-\pz +\bz \right)^{q}(z^{p+s} e^{- z\bz})
      & = (-1)^q e^{ 2z \bz } \pz^q  \left(z^{p+s} e^{- 2z \bz }\right)\\
       &= \sqrt{2}^{q-p-s} H_{p+s,q}(\sqrt{2}z,\sqrt{2}\bz )
       \end{align*}
        upon an appropriate change of variable.
          The second one follows from \eqref{OpForHerPropBurch}, to wit
    \begin{align*}
    e^{z\bz}\left(-\pz +\bz \right)^{q}(z^{p+s} e^{- z\bz})
    &=  q!e^{z\bz}\sum\limits_{k=0}^{q}\frac{(-1)^k}{k!} \frac{ H_{p,q-k}(z,\bz )}{(q-k)!}\pz^{k} (z^{s}  e^{- z\bz})\\
    &\stackrel{\eqref{RodriguezEquivComp}}{=}  q! \sum\limits_{k=0}^{q}\frac{H_{s,k}(z,\bz )}{k!} \frac{ H_{p,q-k}(z,\bz )}{(q-k)!}.
    \end{align*}
    Thence, we have proved that
    $$  H_{p+s,q}(\sqrt{2}z,\sqrt{2}\bz ) =  \sqrt{2}^{p+s-q}q! \sum\limits_{k=0}^{q}\frac{H_{s,k}(z,\bz )}{k!} \frac{ H_{p,q-k}(z,\bz )}{(q-k)!} $$
    which gives rise to \eqref{NielsenGhanmi} by replacing $z$ by $z/\sqrt 2$.
    \end{proof}

   We have called here \eqref{HerExpNielsen} (resp. \eqref{HerExpGhanmi}) by the first (resp. second) variant of Nielsen identity, since
we can write it as a weighted sum of a product of the same polynomials, according to the fact that $\bz^{q-k}=H_{0,q-k}(z,\bz )$
(resp. $z^{s-k}=H_{s-k,0}(z,\bz )$). We reserve the appellation Nielsen's identity to the following

\begin{proposition} We have the following quadratic recurrence formula 
      \begin{align}\label{NielsenR}
{H_{p+m,q+n}(z,\bz )}{}
 &=p!q!m!n! \sum\limits_{j=0}^{min(p,n)} \sum\limits_{k=0}^{min(q,m)}
     \frac{(-1)^{j+k}}{j!k!} \frac{ H_{p-j,q-k}(z,\bz ) }{(p-j)!(q-k)!} \frac{H_{m-k,n-j}(z,\bz )}{(m-k)!(n-j)!} .
     \end{align}
    \end{proposition}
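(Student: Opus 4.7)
The plan is to derive the identity as a direct consequence of the Burchnall-type operational formula \eqref{NiceBurchnall} combined with the realization \eqref{realisationHerm-pq} of $H_{p,q}(z,\bz)$ as an iterated action of the creation-type operators $A := -\pbz + z$ and $B := -\pz + \bz$ on the constant function $1$.

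First, I would observe that $A$ and $B$ commute. A one-line check gives
\[
AB f = \pbz\pz f - f - \bz\pbz f - z\pz f + z\bz f = BA f,
\]
since $\pbz\pz = \pz\pbz$ and the cross terms are symmetric. Consequently, \eqref{realisationHerm-pq} gives the factorization
\[
H_{p+m,q+n}(z,\bz) = A^{p+m} B^{q+n}(1) = A^{p} B^{q}\bigl(A^{m} B^{n}(1)\bigr) = A^{p} B^{q}\bigl(H_{m,n}(z,\bz)\bigr).
\]

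Next I would apply the operational formula \eqref{NiceBurchnall} with $f = H_{m,n}(z,\bz)$ to obtain
\[
H_{p+m,q+n}(z,\bz) = p!q!\sum_{j=0}^{p}\sum_{k=0}^{q} \frac{(-1)^{j+k}}{j!k!} \frac{H_{p-j,q-k}(z,\bz)}{(p-j)!(q-k)!}\, \pbz^{j}\pz^{k}\bigl(H_{m,n}(z,\bz)\bigr).
\]
The mixed partials of $H_{m,n}$ are computed by iterating the differentiation rules \eqref{HerDer1} and \eqref{HerDer2}, which give
\[
\pbz^{j}\pz^{k}\bigl(H_{m,n}(z,\bz)\bigr) = \frac{m!\,n!}{(m-k)!(n-j)!}\, H_{m-k,n-j}(z,\bz),
\]
with the convention that $H_{a,b} = 0$ whenever $a<0$ or $b<0$. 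This convention forces the surviving range of summation to be $0 \le k \le \min(q,m)$ and $0 \le j \le \min(p,n)$, producing exactly the double sum in \eqref{NielsenR}.

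I do not expect any genuine obstacle; the only subtle point is the commutativity of $A$ and $B$, which is what allows one to split the $(p{+}m,q{+}n)$-block and then apply the Burchnall formula to the inner piece. Everything else is bookkeeping: the factor $m!n!$ coming from the derivatives of $H_{m,n}$, together with the $p!q!$ from \eqref{NiceBurchnall}, matches the prefactor $p!q!m!n!$ in \eqref{NielsenR}, and the truncation of both indices follows automatically from the vanishing convention for $H_{p,q}$ with negative indices.
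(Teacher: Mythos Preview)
Your proof is correct and follows essentially the same route as the paper: apply the Burchnall formula \eqref{NiceBurchnall} with $f=H_{m,n}$, identify the left-hand side as $H_{p+m,q+n}$ via \eqref{realisationHerm-pq}, and evaluate $\pbz^{j}\pz^{k}H_{m,n}$ using the differentiation rules. Your explicit verification that $A$ and $B$ commute is a welcome addition, since the paper uses this fact implicitly when asserting $(-\pbz+z)^{p}(-\pz+\bz)^{q}H_{m,n}=H_{p+m,q+n}$.
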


    \begin{proof}
    By taking $f=H_{m,n}(z,\bz )$ in \eqref{NiceBurchnall}, we obtain
     \begin{align*}
 \left(-\pbz +z \right)^{p} & \left(-\pz +\bz \right)^{q}(H_{m,n}(z,\bz ))
 =p!q!\sum\limits_{j=0}^{p} \sum\limits_{k=0}^{q}
                 \frac{(-1)^{j+k}}{j!k!} \frac{ H_{p-j,q-k}(z,\bz ) }{(p-j)!(q-k)!}\pbz^j\pz^{k} (H_{m,n}(z,\bz )).
     \end{align*}
 Therefore the result \eqref{NielsenR} follows upon making use of
 $$
 \left(-\pbz +z \right)^{p} \left(-\pz +\bz \right)^{q}(H_{m,n}(z,\bz ))= H_{p+m,q+n}(z,\bz )
 $$
 together with the fact that
 $$
 \pbz^j\pz^{k} (H_{m,n}(z,\bz ))= m!n! \frac{H_{m-k,n-j}(z,\bz )}{(m-k)!(n-j)!} $$
 for $k\leq m$, $j\leq n$ and $\pbz^j\pz^{k} (H_{m,n}(z,\bz ))=0$ otherwise.
    \end{proof}

As a special case of \eqref{NielsenR} when taking $m=q$ and $n=p$ (keeping in mind the fact that $ \overline{H_{m,n}(z,\bz )}= H_{n,m}(z,\bz )$), we can state the following

       \begin{corollary} We have
       \begin{align}\label{NielsenRCor}
 H_{p+q,p+q}(z,\bz )
 &=(p!q!)^2 \sum\limits_{j=0}^{p} \sum\limits_{k=0}^{q}
                 \frac{(-1)^{j+k}}{j!k!} \frac{ |H_{p-j,q-k}(z,\bz )|^2 }{((p-j)!(q-k)!)^2}  .
     \end{align}
          \end{corollary}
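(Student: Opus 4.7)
The plan is to obtain the corollary as a direct specialization of the just-proved Nielsen identity \eqref{NielsenR}, rather than through any new computation. Substitute $m=q$ and $n=p$ into \eqref{NielsenR}; then the left hand side becomes $H_{p+q,q+p}(z,\bz)$, which is the target, and the constant prefactor $p!q!m!n!$ collapses to $(p!q!)^{2}$.

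Next, I would simplify the summation ranges: with $m=q$ and $n=p$, the bounds $\min(p,n)=\min(p,p)=p$ and $\min(q,m)=\min(q,q)=q$, so the double sum runs cleanly over $0\le j\le p$ and $0\le k\le q$. The product of Hermite factors in the summand becomes
\[
\frac{H_{p-j,q-k}(z,\bz)}{(p-j)!(q-k)!}\cdot\frac{H_{q-k,p-j}(z,\bz)}{(q-k)!(p-j)!},
\]
which already carries the denominator $((p-j)!(q-k)!)^{2}$ appearing in the corollary.

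The only remaining point is to recognize the modulus squared. This uses the symmetry relation $\overline{H_{m,n}(z,\bz)}=H_{n,m}(z,\bz)$ noted right after the recurrence formulae in Section~3.1: applied with $m=p-j$ and $n=q-k$, it gives $H_{q-k,p-j}(z,\bz)=\overline{H_{p-j,q-k}(z,\bz)}$, so the product above equals $|H_{p-j,q-k}(z,\bz)|^{2}/((p-j)!(q-k)!)^{2}$, yielding \eqref{NielsenRCor} immediately.

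There is no real obstacle here beyond bookkeeping; the step that has to be stated carefully is the invocation of the conjugation symmetry, since it is what turns the asymmetric-looking Nielsen formula into a manifestly real, positive-kernel expression on the right hand side (which in turn explains why the diagonal case $(p+q,p+q)$ is the natural one to isolate).
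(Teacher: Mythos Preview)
Your proposal is correct and matches the paper's own argument essentially line for line: the corollary is stated in the paper simply as the case $m=q$, $n=p$ of \eqref{NielsenR}, together with the conjugation symmetry $\overline{H_{m,n}(z,\bz)}=H_{n,m}(z,\bz)$, exactly as you outline.
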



\noindent {\bf 3.3. Generating functions. }
\begin{proposition} We have the following generating functions:
        \begin{align}
         a) & \quad \sum\limits_{p=0}^{+\infty}\frac{u^p}{p!}H_{p,q}(z,\bz ) = (\bz- u)^q e^{ u z } .
        \label{GenFctComp0} \\
        a') & \quad \sum\limits_{q=0}^{+\infty}\frac{v^q}{q!}H_{p,q}(z,\bz ) = (z-v)^p e^{ v\bz } .
        \label{GenFctComp11} \\
         b) & \quad
        \sum\limits_{p,q=0}^{+\infty}\frac{u^p}{p!}\frac{v^q}{q!}H_{p,q}(z,\bz )=e^{ uz + v\bz -uv}.
        \label{GenFctComp1}
        \end{align}
        These equalities are valid for all $u$, $v$, $z$ complex.
\end{proposition}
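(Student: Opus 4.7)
The plan is to recognize each sum as the exponential of an operator applied to a monomial, using the operational representations \eqref{OpForHer22} and \eqref{OpForHer2} already established. For \eqref{GenFctComp0}, starting from $H_{p,q}(z,\bz) = (-\pbz + z)^p(\bz^q)$ I would write
\begin{align*}
\sum_{p=0}^{\infty} \frac{u^p}{p!} H_{p,q}(z,\bz) = e^{u(-\pbz + z)}(\bz^q),
\end{align*}
and then factor $e^{u(-\pbz + z)} = e^{uz}\, e^{-u\pbz}$. This factorization is legitimate because $-\pbz$ and multiplication by $z$ commute (they act on independent variables), so the Baker--Campbell--Hausdorff expansion collapses to the trivial term. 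The operator $e^{-u\pbz}$ is the translation $g(\bz)\mapsto g(\bz-u)$, so it sends $\bz^q$ to $(\bz-u)^q$, yielding the desired formula. The identity \eqref{GenFctComp11} follows by the symmetric argument starting from $H_{p,q}(z,\bz) = (-\pz + \bz)^q(z^p)$ and factoring $e^{v(-\pz + \bz)} = e^{v\bz}\, e^{-v\pz}$.

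For the double generating function \eqref{GenFctComp1}, the cleanest route is to take \eqref{GenFctComp0} as a starting point, multiply by $v^q/q!$, and sum over $q$:
\begin{align*}
\sum_{p,q=0}^{\infty} \frac{u^p v^q}{p!\, q!} H_{p,q}(z,\bz) = \sum_{q=0}^{\infty} \frac{v^q}{q!} (\bz - u)^q e^{uz} = e^{v(\bz - u)}\, e^{uz} = e^{uz + v\bz - uv},
\end{align*}
which reduces (b) to (a) essentially for free.

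Convergence is not a real issue: $H_{p,q}$ is a polynomial (bounded through \eqref{HerExp}) and the right-hand sides are entire in all four variables, so term-by-term summation is justified. The only step needing care is the operator factorization $e^{u(-\pbz + z)} = e^{uz} e^{-u\pbz}$, which is the main obstacle if one wishes to avoid formal exponentials of differential operators. This can be sidestepped by plugging $f=\bz^q$ into \eqref{OpForHer11} and writing
\begin{align*}
\sum_{p=0}^{\infty} \frac{u^p}{p!} H_{p,q}(z,\bz) = e^{z\bz}\sum_{p=0}^{\infty} \frac{(-u)^p}{p!} \pbz^p\bigl(\bz^q e^{-z\bz}\bigr) = e^{z\bz}\bigl[\bz^q e^{-z\bz}\bigr]_{\bz\mapsto \bz-u},
\end{align*}
where $\sum_{p} (-u)^p \pbz^p/p!$ is recognized as the antiholomorphic translation $\bz\mapsto \bz-u$; a direct simplification of the right side immediately gives $(\bz-u)^q e^{uz}$, establishing \eqref{GenFctComp0} without invoking any operator exponential identity.
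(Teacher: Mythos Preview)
Your argument is correct and is essentially the same as the paper's: both exponentiate the operational representation $H_{p,q}=(-\pbz+z)^p(\bz^q)$ (resp.\ $(-\pz+\bz)^q(z^p)$), factor the exponential using the commutativity of $z$ with $\pbz$ (resp.\ $\bz$ with $\pz$), recognize the translation operator, and then obtain (b) by summing the single-index result in the remaining variable. The only substantive difference is that the paper supplies an explicit convergence argument via the Laguerre representation \eqref{Laguerre} and Szeg\H{o}'s global bound $|L_n^{(\alpha)}(x)|\le L_n^{(\alpha)}(0)e^{x/2}$, whereas you dismiss convergence in one line; your claim is correct (the explicit expansion \eqref{HerExp} gives at most $q+1$ terms for fixed $q$, each dominated by a convergent exponential series), but the paper's estimate is sharper and worth noting.
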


\begin{proof}
The generating function $a)$ is in fact the conjugate counterpart of $a')$. To prove $a')$, we begin by writing
          \begin{align*}
       \sum\limits_{q=0}^{+\infty}\frac{v^q}{q!}\left(-\pz +\bz \right)^{q}(f)
                                      =e^{-v\pz + v \bz }(f)  
                                      =e^{ v\bz }e^{-v\pz }(f).
       \end{align*}
Now, since $e^{-v\pz }(z^{p} ) = \sum\limits_{k=0}^{p}  \dbinom{p}{k} (-v)^k z^{p-k}$, it follows
       \begin{align*}
       \sum\limits_{q=0}^{+\infty}\frac{v^q}{q!} H_{p,q}(z,\bz ) 
        = e^{ v\bz } \sum\limits_{k=0}^{p} \binom{p}{k} (-v)^k z^{p-k}
        =(z-v)^p e^{ v\bz }.
      \end{align*}
This is exactly \eqref{GenFctComp11} from which we deduce
    \begin{align*}
    \sum\limits_{p,q=0}^{+\infty}\frac{u^p}{p!}\frac{v^q}{q!} H_{p,q}(z,\bz )
       =  e^{ v\bz }\sum\limits_{p,q=0}^{+\infty}\frac{u^p}{p!}(z-v)^p
             =e^{ uz + v\bz -uv}.
    \end{align*}
We conclude the proof by providing upper bound of the complex Hermite polynomials
and next discussing convergence conditions of the involved series.
In fact, for fixed $p$ and large $q$, say $q=p+k$, we get from \eqref{Laguerre} that
\begin{align}
 \left|H_{p,p+k}(z,\bz ) \right|  =  p!  \left|z\right|^{k}  \left| L^{(k)}_{p}(|z|^2)\right|
 \leq (p+k)!  \frac{|z|^k}{k!}e^{|z|^2/2}, \label{estimate}
\end{align}
where the obtained upper bound in \eqref{estimate} follows
using the classical global uniform estimate for the generalized Laguerre polynomials due to Szeg\"o (see \cite{Szego75,Lewandowski98}):
$$
\left| L^{(\alpha)}_{n}(x)\right| \leq L^{(\alpha)}_{n}(0) e^{x/2} =\frac{\Gamma(n+\alpha+1)}{n!\Gamma(\alpha+1)} e^{x/2}
$$
for $\alpha, x \geq 0$ and $n=0,1,2, \cdots$. Therefore,
one checks easily the following
   \begin{align*}
            \left|\frac{v^{(p+k)}}{(p+k)!} H_{p,p+k}(z,\bz ) \right|
            &\leq  \frac{|v z|^k}{k!} \left(|v|^p e^{|z|^2/2}\right).%
          \end{align*}
This shows that the series \eqref{GenFctComp11}, in the variable $v$, converges absolutely and uniformly on any compact set of $\C$, for every complex $z$. For the series \eqref{GenFctComp1}, the convergence is then immediate and holds for every $u,v,z \in \C$.
\end{proof}

\begin{remark}
We do not claim the representation \eqref{GenFctComp1} is new, however we have difficulty to find a rigorous simpler proof of it in the literature.
\end{remark}

\begin{corollary} We have
        \begin{align*}
        \sum\limits_{p,q=0}^{+\infty}\frac{{\bar z}^pz^q}{p!q!}H_{p,q}(z,\bz )=e^{ z \bz}.
        \end{align*}
Furthermore, for every fixed integers $m,n\geq 0$, we have the identities
         \begin{align*}
          \sum\limits_{p=0}^{+\infty}\frac{\bz^p}{p!}H_{p,n}(z,\bz ) = 0
          \quad \mbox{and} \quad
          \sum\limits_{q=0}^{+\infty}\frac{z^q}{q!}H_{m,q}(z,\bz ) = 0  .
        \end{align*}
\end{corollary}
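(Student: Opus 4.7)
The plan is to obtain all three identities as immediate specializations of the generating functions \eqref{GenFctComp0}, \eqref{GenFctComp11}, \eqref{GenFctComp1}, exploiting the fact that the preceding proposition asserts their validity for every complex triple $(u,v,z)$.

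For the first (main) identity, I would set $u=\bz$ and $v=z$ in the bivariate formula \eqref{GenFctComp1}. The exponent on the right-hand side collapses to $uz+v\bz-uv=\bz z+z\bz-\bz z=z\bz$, which delivers
\[
\sum_{p,q=0}^{+\infty}\frac{\bz^{p}z^{q}}{p!\,q!}H_{p,q}(z,\bz)=e^{z\bz}.
\]

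For the two vanishing sums (which must be read with $n\geq 1$, respectively $m\geq 1$, since for $n=0$ formula \eqref{OpForHer22} already gives $H_{p,0}(z,\bz)=z^{p}$ and the sum equals $e^{z\bz}\neq 0$), I specialize the one-variable generating functions. Setting $u=\bz$ in \eqref{GenFctComp0} turns the right-hand side into $(\bz-\bz)^{n}e^{\bz z}=0$, which is exactly $\sum_{p}\frac{\bz^{p}}{p!}H_{p,n}(z,\bz)=0$. Symmetrically, setting $v=z$ in \eqref{GenFctComp11} gives $(z-z)^{m}e^{z\bz}=0$, yielding the companion identity.

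The only point that deserves mention is the legitimacy of substituting the specific complex values $u=\bz$ and $v=z$ into the three generating function identities. This is already settled by the convergence analysis carried out in the preceding proof, where the Szegö-type estimate $\left|H_{p,p+k}(z,\bz)\right|\leq (p+k)!\,\frac{|z|^{k}}{k!}e^{|z|^{2}/2}$ is used to establish absolute and uniform convergence of the series on every compact subset of the parameter space. Thus no convergence obstacle arises, and the corollary follows without further work.
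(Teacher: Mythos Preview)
Your approach is exactly the intended one: the paper states this corollary without proof because it is an immediate specialization of the generating functions \eqref{GenFctComp0}, \eqref{GenFctComp11}, \eqref{GenFctComp1} at $u=\bz$ and $v=z$, precisely as you carry out. Your remark that the vanishing identities actually require $n\geq 1$ and $m\geq 1$ (since $H_{p,0}=z^{p}$ and $H_{0,q}=\bz^{q}$ give $e^{z\bz}$ rather than $0$) is a correct observation about the statement itself.
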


According to the referee suggestion, it would be more interesting, in obtaining reproducing kernels using coherent states, if one can provide the sum
         \begin{align*}
        \sum\limits_{q=0}^{+\infty}\frac{H_{p,q}(z,\bz ) \overline{H_{p,q}(w,\bar w )} }{p!q!}.
        \end{align*}
        This is contained in \cite[Eq. (2.6)]{AIM00} (see also \cite[p. 405]{IntInt06}). However, the following result gives a closed form of such sum in a more general context.

\begin{proposition} For every $z,w\in\C$ and fixed positive integers $p,m$, we have
         \begin{align}
        \sum\limits_{q=0}^{+\infty}\frac{H_{p,q}(z,\bz ) \overline{H_{m,q}(w,\bar w )} }{q!} =
         H_{p,m}(z-w,\overline{z-w}) e^{w\bz}.
        \label{SumRepKer1}
        \end{align}
\end{proposition}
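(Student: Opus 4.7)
My plan is to reduce the sum to a single application of the generating function \eqref{GenFctComp0} combined with the operational realization \eqref{OpForHer22}. First, using the symmetry $\overline{H_{m,q}(w,\bar w)}=H_{q,m}(w,\bar w)$ together with $H_{p,q}(z,\bz)=\left(-\pbz+z\right)^{p}(\bz^{q})$, I would pull the differential operator outside the sum (it acts only in the variable $\bz$, while the factors $H_{q,m}(w,\bar w)$ depend only on $w,\bar w$) to obtain
\begin{align*}
\sum_{q=0}^{+\infty} \frac{H_{p,q}(z,\bz)\,\overline{H_{m,q}(w,\bar w)}}{q!}
 = \left(-\pbz+z\right)^{p}\!\left[\sum_{q=0}^{+\infty} \frac{\bz^{q}\, H_{q,m}(w,\bar w)}{q!}\right].
\end{align*}
The interchange of differentiation and summation is justified by the Szeg\"o-type estimate on the complex Hermite polynomials already invoked in the proof of \eqref{GenFctComp1}, which gives absolute and uniform convergence on compacta in $\bz$.

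Next, the inner series is a direct instance of \eqref{GenFctComp0}: after relabelling the summation index and setting $u=\bz$, $(z,\bz)\to(w,\bar w)$, one reads off
\begin{align*}
\sum_{q=0}^{+\infty} \frac{\bz^{q}\, H_{q,m}(w,\bar w)}{q!} = (\bar w-\bz)^{m}\, e^{\bz w}.
\end{align*}
It then suffices to evaluate $\left(-\pbz+z\right)^{p}\bigl[(\bar w-\bz)^{m} e^{\bz w}\bigr]$. To do so I would invoke \eqref{OpForHer11} in the form $(-\pbz+z)^{p}(f)=(-1)^{p}e^{z\bz}\pbz^{p}(e^{-z\bz}f)$ with $f=(\bar w-\bz)^{m}e^{\bz w}$, and perform the affine change of variable $\bar\zeta:=\bz-\bar w$, $\zeta:=z-w$, so that $\pbz=\partial_{\bar\zeta}$ and the bracketed function collapses to a $\bz$-independent exponential times $\bar\zeta^{\,m}e^{-\zeta\bar\zeta}$.

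At that point the Rodrigues form \eqref{RodriguezEquivComp1} identifies $\partial_{\bar\zeta}^{\,p}\bigl(\bar\zeta^{\,m}e^{-\zeta\bar\zeta}\bigr)$ with a constant multiple of $H_{p,m}(\zeta,\bar\zeta)$, and the accumulated exponentials collapse to $e^{w\bz}$ via the elementary identity $z\bz-\bar w(z-w)-(z-w)(\bz-\bar w)=w\bz$. The only real difficulty is the bookkeeping: carefully tracking the signs produced by the two Rodrigues-type reductions and verifying the exponential identity above; once these are handled, the desired formula drops out.
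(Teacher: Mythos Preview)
Your strategy is sound and closely parallels the paper's, with the roles of the two variables interchanged. The paper takes the generating function \eqref{GenFctComp11} in the variable $v$, sets $v=w$, multiplies by $e^{-w\bar w}$, and applies $\partial_w^m$ so that the Rodrigues formula \eqref{RodriguezEquivComp} produces the factor $H_{q,m}(w,\bar w)$; it then rewrites the right-hand side in the shifted variable $\xi=z-w$ and invokes \eqref{RodriguezEquivComp} once more. You instead produce $H_{p,q}(z,\bz)$ operationally via \eqref{OpForHer22}, collapse the sum with \eqref{GenFctComp0}, and then use \eqref{OpForHer11}+\eqref{RodriguezEquivComp1} in the shifted variable $\bar\zeta=\bz-\bar w$. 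Both routes hinge on the same generating identity and the same Rodrigues-type reduction after an affine shift; yours stays closer to the operational theme of the paper and avoids the auxiliary multiplication by $e^{-w\bar w}$.

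One word of caution on the bookkeeping you rightly flag. In your last step the inner series gives $(\bar w-\bz)^m e^{\bz w}$, and when you pass to $\bar\zeta=\bz-\bar w$ this contributes a factor $(-1)^m$ in front of $\bar\zeta^{\,m}$. The two factors $(-1)^p$ coming from \eqref{OpForHer11} and from \eqref{RodriguezEquivComp1} cancel, so your computation yields $(-1)^m H_{p,m}(z-w,\overline{z-w})\,e^{w\bz}$ rather than the displayed right-hand side. This is not an error in your argument: a direct check with $p=0$, $m=1$ (where the left side equals $(\bar w-\bz)e^{w\bz}$ while $H_{0,1}(z-w,\overline{z-w})e^{w\bz}=(\bz-\bar w)e^{w\bz}$) shows the stated identity is off by $(-1)^m$, and the paper's own derivation drops exactly this sign when replacing $\partial_w^m$ by $\partial_\xi^m$ under $\xi=z-w$. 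Note too that the subsequent corollary \eqref{SumRepKer2} only follows from \eqref{SumRepKer1} once the missing $(-1)^m$ is restored (since $H_{p,p}(\xi,\bar\xi)=(-1)^p p!\,L_p^{(0)}(|\xi|^2)$). So carry your signs through honestly and record the corrected formula.
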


\begin{proof}
Multiplication of both sides of \eqref{GenFctComp11} by $e^{-w \bar w}$ and next differentiating the obtained formula $m$-times with respect to $w$ infers
$$
\sum\limits_{q=0}^{+\infty}\frac{1}{q!}H_{p,q}(z,\bz ) \partial_w^m\left( w^q e^{-w \bar w}\right) = \partial_w^m\left((z-w)^p e^{ w(\bz-\bar w) }\right).
$$
Now, in view of \eqref{RodriguezEquivComp}, i.e., $\partial_w^m\left( w^q e^{-w \bar w}\right) =(-1)^m H_{q,m}(w,\bar w )e^{-w \bar w} $, we obtain
\begin{align}
\sum\limits_{q=0}^{+\infty}\frac{H_{p,q}(z,\bz )H_{q,m}(w,\bar w)}{q!} = (-1)^m  e^{w \bar w} \partial_w^m\left((z-w)^p e^{ w(\bz-\bar w) }\right).
\label{PfRepKer}
\end{align}
The right hand side of the previous equation can be rewritten as
\begin{align*}
(-1)^m  e^{w \bar w} \partial_w^m\left((z-w)^p e^{ w(\bz-\bar w) }\right)
&=  (-1)^m   e^{w \bz} \left[ e^{ (z-w)(\overline{z-w}) }\partial_w^m\left((z-w)^p e^{ -(z-w)(\overline{z-w}) }\right) \right]\\
&=    e^{w \bz} \left[ (-1)^m e^{ \xi\bar\xi }\partial_\xi^m\left(\xi^p e^{ -\xi\bar\xi }\right)\Big|_{\xi=z-w} \right]\\
&\stackrel{\eqref{RodriguezEquivComp}}{=}
  H_{p,m}(z-w,\bz-\bar w )  e^{w \bz}.
\end{align*}
Replacing this in \eqref{PfRepKer} leads to the desired result.
\end{proof}

\begin{remark} As a particular case of \eqref{SumRepKer1}, when taking $p=m$, we get
        \begin{align}
        \sum\limits_{q=0}^{+\infty}\frac{H_{p,q}(z,\bz ) \overline{H_{p,q}(w,\bar w )} }{p!q!} =
         L^{(0)}_p(|z-w|^2)  e^{w\bz},
        \label{SumRepKer2}
        \end{align}
        since in this case $ H_{p,p}(\xi,\bar\xi) =  p! L^{(0)}_p(|\xi|^2)  $ (see \eqref{Laguerre} with $k=0$).
        Furthermore, by rewriting the sum in the left hand side of \eqref{SumRepKer2} as
        $ \sum\limits_{q=0}^{+\infty} =  \sum\limits_{q=0}^{p-1} +  \sum\limits_{q=p}^{+\infty}$,
        and next taking $z=w$ and $n=q-p$ for $q\geq p$, one deduces the following
         \begin{align}
        \sum\limits_{n=0}^{+\infty}\frac{|H_{p+n,p}(z,\bz )|^2 }{(p+n)!p!} =
        \frac{(-1)^p}{p!} e^{z\bz} - \sum\limits_{q=0}^{p-1}\frac{|H_{p,q}(z,\bz )|^2}{p!q!}.
        \label{SumRepKer3}
        \end{align}
\end{remark}

\begin{proposition} We have the following summation formula:
        \begin{align}
        \sum\limits_{p,q,r,s=0}^{+\infty}\frac{ H_{p,q}(z,\bz ) H_{p,s}(u,\bar u ) H_{q,r}(v,\bar v )}{p!q!r!s!}
        = e^{ z(u+1)} e^{\bz (v+1)}e^{-(u+v+uv+1)}.
        \label{GenFctComp2}
        \end{align}
\end{proposition}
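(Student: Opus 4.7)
The plan is to evaluate the quadruple sum in two stages, by iteratively applying the generating functions established in Proposition~3.1.

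First, with $p$ and $q$ held fixed, I would evaluate the two inner sums over $r$ and $s$ in closed form. Each involves only a single complex Hermite polynomial, so specializing the generating variable in \eqref{GenFctComp11} to the value $1$ (with the arguments $(z,\bz)$ relabeled as $(u,\bar u)$ in one case and the pattern $(p,z,\bz)\to (q,v,\bar v)$ in the other) yields
\begin{align*}
\sum_{s=0}^{+\infty}\frac{H_{p,s}(u,\bar u)}{s!}=(u-1)^{p}e^{\bar u},\qquad \sum_{r=0}^{+\infty}\frac{H_{q,r}(v,\bar v)}{r!}=(v-1)^{q}e^{\bar v}.
\end{align*}

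Substituting these back into the original quadruple sum and pulling out the scalar factor $e^{\bar u+\bar v}$ reduces the problem to evaluating
\begin{align*}
e^{\bar u+\bar v}\sum_{p,q=0}^{+\infty}\frac{(u-1)^{p}(v-1)^{q}}{p!\,q!}\,H_{p,q}(z,\bz).
\end{align*}
The remaining double series is precisely the two-variable generating function \eqref{GenFctComp1} with $u$ replaced by $u-1$ and $v$ replaced by $v-1$, which collapses it to $e^{(u-1)z+(v-1)\bz-(u-1)(v-1)}$. An algebraic rearrangement of the resulting exponent then recasts the closed form in the shape stated in the proposition.

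The mechanical steps are routine once these two reductions are set up. The principal obstacle I anticipate is justifying the interchange of summation orders implicit in both successive reductions, so that the inner sums can be evaluated before performing the $(p,q)$-sum. This should follow from the polynomial growth estimate \eqref{estimate} together with the absolute and locally uniform convergence argument already employed in the proof of \eqref{GenFctComp1}; applied termwise, this guarantees that the iterated series converges absolutely on compact subsets of $\C^{4}$ and legitimizes the reorderings. Carefully tracking the exponent through the final simplification is the other point that will require some bookkeeping.
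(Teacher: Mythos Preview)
Your route differs from the paper's: the paper rewrites $H_{p,s}(u,\bar u)=(-\partial_u+\bar u)^s(u^p)$ and $H_{q,r}(v,\bar v)=(-\partial_v+\bar v)^r(v^q)$, sums over $p,q$ first via \eqref{GenFctComp1}, and only then carries out the $r,s$ sums as exponentials of differential operators. You instead sum over $r,s$ first by specializing \eqref{GenFctComp11} at the value $1$, and then over $p,q$ via \eqref{GenFctComp1}. Both strategies are legitimate and, done correctly, lead to the same closed form.

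The genuine gap is your last sentence. Your computation produces
\[
e^{\bar u+\bar v}\,e^{(u-1)z+(v-1)\bar z-(u-1)(v-1)},
\]
and this does \emph{not} rearrange into $e^{z(u+1)}e^{\bar z(v+1)}e^{-(u+v+uv+1)}$. The stated right-hand side contains no $\bar u$ or $\bar v$, whereas the left-hand side of \eqref{GenFctComp2} manifestly depends on both (already $H_{0,1}(u,\bar u)=\bar u$). A quick check at $u=v=0$ confirms the discrepancy: the quadruple sum collapses via $H_{p,s}(0,0)=(-1)^p p!\,\delta_{p,s}$ to $\sum_{p,q}(-1)^{p+q}H_{p,q}(z,\bar z)/(p!q!)=e^{-z-\bar z-1}$, which matches your expression but not the proposition's $e^{z+\bar z-1}$. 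In fact the paper's own derivation drops the $\bar u$ and $\bar v$ contributions (and a sign) when applying $(-\partial_u+\bar u)^s$ and $(-\partial_v+\bar v)^r$ to the exponential; carried out correctly it yields exactly your answer. So your method is sound, but the ``routine bookkeeping'' at the end cannot succeed as written: you have proved the correct identity, and it is the formula in the proposition that needs amending.
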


\begin{proof} We make use of \eqref{OpForHer2}  to write the left hand side of \eqref{GenFctComp2} as
     \begin{align*}
        \sum\limits_{p,q,r,s=0}^{+\infty} \frac{1}{p!q!r!s!} &H_{p,q}(z,\bz ) H_{p,s}(u,\bar u ) H_{q,r}(v,\bar v )\\
        &=\sum\limits_{r,s=0}^{+\infty}  \frac{1}{r!s!}(-\partial_v +\bar v)^r (-\partial_u +\bar u)^s
        \left( \sum\limits_{p,q=0}^{+\infty} \frac{u^p}{p!} \frac{v^q}{q!} H_{p,q}(z,\bz )\right)
        \end{align*}
By applying \eqref{GenFctComp1}, we obtain
         \begin{align*}
        \sum\limits_{p,q,r,s=0}^{+\infty}\frac{1}{p!q!r!s!} &H_{p,q}(z,\bz ) H_{p,s}(u,\bar u ) H_{q,r}(v,\bar v )\\
        & 
        =\sum\limits_{r,s=0}^{+\infty}  \frac{1}{r!s!}(-\partial_v +\bar v)^r (-\partial_u +\bar u)^s
         \left( e^{uz + v\bz -uv}\right)\\
        &=\sum\limits_{r,s=0}^{+\infty}  \frac{1}{r!}(-\partial_v +\bar v)^r  \left( \frac{(z-v)^s}{s!}e^{ uz + v\bz - uv}\right)\\
        &=\sum\limits_{r=0}^{+\infty}  \frac{1}{r!}(-\partial_v +\bar v)^r  \left( e^{z-v}e^{ uz + v\bz - uv}\right)\\
        &=\sum\limits_{r=0}^{+\infty}  \frac{(-1+\bz -u)^r}{r!}  e^{z-v}e^{ uz + v\bz - uv} \\
        &=e^{ z(u+1)} e^{\bz (v+1)}e^{-(u+v+uv+1)}.
        \end{align*}
\end{proof}

\begin{remark}
The special case of $r=s=0$ (in \eqref{GenFctComp2}) leads to the generating relation \eqref{GenFctComp1}.
\end{remark}

\noindent {\bf 3.4. Runge's addition formula. }

  \begin{proposition} We have the following addition formula
      \begin{align} \label{add-forRealDHP}
      H_{p,q}\left( z+w, \bz + \bw \right)=p!q!\left(\frac 1{2}\right)^{(p+q)/2}  \sum_{j=0}^p  \sum\limits_{k=0}^{q}
     \frac{H_{j,k}(\sqrt{2}z,\sqrt{2}\bz)}{k!j!} \frac{H_{p-j,q-k}(\sqrt{2}w,\sqrt{2}\bw)}{(q-k)!(p-j)!} .
      \end{align}
 \end{proposition}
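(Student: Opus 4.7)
The plan is to deduce the addition formula by a Cauchy-product manipulation of the generating function \eqref{GenFctComp1}. Applying that identity with $z$ replaced by $z+w$ gives
\begin{align*}
\sum_{p,q=0}^{+\infty}\frac{u^p v^q}{p!\,q!}\,H_{p,q}(z+w,\bz+\bw)
= e^{\,u(z+w)+v(\bz+\bw)-uv}.
\end{align*}
The key algebraic observation is the factorization
\begin{align*}
e^{\,u(z+w)+v(\bz+\bw)-uv}
= e^{\,uz+v\bz-uv/2}\cdot e^{\,uw+v\bw-uv/2},
\end{align*}
in which the constant cross term $-uv$ has been split symmetrically between the two factors.

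Next, I would recognize each factor as a rescaled instance of \eqref{GenFctComp1}. Specifically, replacing $(u,v,z,\bz)$ by $(u/\sqrt 2,\,v/\sqrt 2,\,\sqrt 2 z,\,\sqrt 2\bz)$ in \eqref{GenFctComp1} yields
\begin{align*}
e^{\,uz+v\bz-uv/2}
= \sum_{j,k=0}^{+\infty}\frac{u^{j}v^{k}}{j!\,k!\,2^{(j+k)/2}}\,H_{j,k}(\sqrt 2\,z,\sqrt 2\,\bz),
\end{align*}
and similarly for the $w$-factor. Multiplying the two series by the Cauchy product and collecting the coefficient of $u^{p}v^{q}$ produces
\begin{align*}
\frac{H_{p,q}(z+w,\bz+\bw)}{p!\,q!}
= \Big(\tfrac 1 2\Big)^{(p+q)/2}\sum_{j=0}^{p}\sum_{k=0}^{q}
\frac{H_{j,k}(\sqrt 2 z,\sqrt 2 \bz)}{j!\,k!}\,
\frac{H_{p-j,q-k}(\sqrt 2 w,\sqrt 2 \bw)}{(p-j)!\,(q-k)!},
\end{align*}
which, after multiplication by $p!q!$, is exactly \eqref{add-forRealDHP}.

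The only non-algebraic point to verify is the legitimacy of rearranging and Cauchy-multiplying the double series, and this is not a real obstacle: the estimate \eqref{estimate} already invoked in the proof of \eqref{GenFctComp1} guarantees that all three series converge absolutely and uniformly on compact subsets of $\C^{2}\times\C^{2}$ in the variables $(u,v)$ (and entirely in $z,w$), so term-by-term identification of coefficients is justified. An equivalent route would be to apply the operational identity \eqref{NiceBurchnall} together with the realization \eqref{realisationHerm-pq} after writing $-\pbz+(z+w)=(-\pbz+z)+w$ and $-\pz+(\bz+\bw)=(-\pz+\bz)+\bw$, expanding binomially, and grouping the $z$- and $w$-parts; but the generating function route above is shorter and uses only results already in hand.
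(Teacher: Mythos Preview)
Your generating-function argument is correct and complete: the factorization $e^{u(z+w)+v(\bz+\bw)-uv}=e^{uz+v\bz-uv/2}\,e^{uw+v\bw-uv/2}$ followed by the rescaled application of \eqref{GenFctComp1} and a Cauchy product yields the identity cleanly, and you rightly note that \eqref{estimate} justifies the rearrangements.

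This is, however, a genuinely different route from the paper's. The paper stays within its operational theme: it introduces the operators $A_\zeta=-\partial_{\sqrt2\,\bar\zeta}+\sqrt2\,\zeta$ and $A_{\bar\zeta}=-\partial_{\sqrt2\,\zeta}+\sqrt2\,\bar\zeta$ (for $\zeta=z,w$), observes via the chain rule that the creation operators in the variable $z+w$ split as $2^{-1/2}(A_z+A_w)$ and $2^{-1/2}(A_{\bar z}+A_{\bar w})$, expands these commuting sums by the binomial theorem, and then uses $A_\zeta^jA_{\bar\zeta}^k\cdot(1)=H_{j,k}(\sqrt2\,\zeta,\sqrt2\,\bar\zeta)$. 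Your method has the virtue of being purely algebraic once \eqref{GenFctComp1} is in hand, and it avoids introducing rescaled differential operators; the paper's method, on the other hand, is self-contained (it does not appeal to convergence of series) and better illustrates the paper's main thesis that operational representations drive such identities. Your closing remark about an alternative via \eqref{NiceBurchnall} and \eqref{realisationHerm-pq} is in the right spirit but not quite the paper's argument: writing $-\pbz+(z+w)=(-\pbz+z)+w$ treats $w$ as a scalar and would not by itself produce the $H_{p-j,q-k}(\sqrt2\,w,\sqrt2\,\bw)$ factors; the paper instead splits the \emph{derivative} in the sum variable symmetrically between $z$ and $w$.
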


\begin{proof}
Set $ A_z := -\frac{\partial}{\partial (\sqrt 2\overline{z})}+ (\sqrt 2 z) $ and
$ A_{\overline{z}} := -\frac{\partial}{\partial (\sqrt 2z)}+ (\sqrt 2 \overline{z}) $,
and note that
  \begin{align}\label{Hermitesrt}
   A_{\overline{z}}^m A_z^n \cdot (1) =  H_{m,n}(\sqrt{2}z,\sqrt{2}\overline{z}).
  \end{align}
 Note also that $H_{p,q}\left( z+w, \bz + \bw \right)$ can be written in terms of $A_z$ and $A_w$ as
  \begin{align*}
  H_{p,q}\left( z+w ,\overline{z+w} \right)
     & =  \left(-\frac{d}{d(\overline{z+w})}+ (z+w)\right)^p\left(-\frac{d}{d(z+w)}+ (\overline{z+w})\right)^q.(1)\\
     & = \left(\frac12\right)^{(p+q)/2} \left\{  \left(A_z + A_w \right)^p\left(A_{\overline{z}}+ A_{\overline{w}} \right)^q \right\} .(1).
  \end{align*}
Now, since the involved differential operators $A_z$, $A_{\overline{z}}$, $A_w$ and $A_{\overline{w}}$ commute, the binomial formula yields
   \begin{align*}
  H_{p,q}\left( z+w ,\overline{z+w} \right)
             & = \left(\frac12\right)^{(p+q)/2} \left( \sum_{j=0}^p \sum_{k=0}^q \binom{p}{j} \binom{q}{k} A_z^j
             A_w^{p-j}    A_{\overline{z}}^k  A_{\overline{w}}^{q-k} \right) .(1)\\
              & = \left(\frac12\right)^{(p+q)/2} \sum_{j=0}^p \sum_{k=0}^q \binom{p}{j} \binom{q}{k}
            \left( A_z^j  A_{\overline{z}}^k .(1) \right) \left(  A_w^{p-j}  A_{\overline{w}}^{q-k}.(1) \right).
             \end{align*}
  Thence, we obtain the asserted result according to \eqref{Hermitesrt}.
 \end{proof}

\begin{remark}
An alternative way to prove \eqref{add-forRealDHP} is to write $H_{p,q}\left( z+w, \bz + \bw \right)$ in terms of the operator $ A_z $ as
\begin{align*}
  H_{p,q}\left( z+w, \bz + \bw \right) & = \left(\frac 1{2}\right)^{(p+q)/2} \sum_{j=0}^p     \binom{p}{j}\left( A_z + A_w \right)^q .((\sqrt{2}z)^j (\sqrt{2}w)^{p-j})
  \end{align*}
and next applying the binomial formula, keeping in mind that the operators $A_z$ and $A_w$ commute and satisfy
  $$  A_z^r(z^s) = {2}^{-s/2}H_{s,r}(\sqrt{2}z,\sqrt{2}\overline{z}).$$
\end{remark}




\section*{Acknowledgements}
The author is thankful to the anonymous referees for their valuable suggestions for improving the presentation of the paper, and Professor A. Intissar for valuable discussions.

\end{document}